\documentclass[12pt]{elsarticle}

\usepackage{amsmath, amsthm, amssymb}
\usepackage{bm}                  
\usepackage{mathtools}          
\usepackage{hyperref}

\newtheorem{theorem}{Theorem}[section]
\newtheorem{proposition}[theorem]{Proposition}
\newtheorem{corollary}[theorem]{Corollary}
\newtheorem{lemma}[theorem]{Lemma}
\newtheorem{remark}{Remark}[section]
\newtheorem{example}{Example}[section]

\newcommand{\R}{\mathbb{R}}
\newcommand{\ones}{\bm{1}}              
\newcommand{\bg}{\bm{g}}                
\newcommand{\bh}{\bm{h}}                
\newcommand{\bw}{\bm{w}}                
\newcommand{\bx}{\bm{x}}                
\newcommand{\normF}[1]{\|#1\|_F}
\newcommand{\normtwo}[1]{\|#1\|_2}
\newcommand{\rank}{\operatorname{rank}}
\newcommand{\Dtwo}{D^{(2)}}
\newcommand{\gF}{\bg^F}
\newcommand{\normui}[1]{\left|\!\left|\!\left|#1\right|\!\right|\!\right|}  

\begin{document}

\begin{frontmatter}

\title{Universal optimality of the double-centred matrix under
       unitarily invariant norms for dissimilarity data}

\author[uchceu]{M.~Nuria de las Heras Santos}
\ead{marianuria.deherassantos@alumnos.uchceu.es}

\author[uchceu]{Antonio Falc\'o\corref{cor1}}
\ead{afalco@uchceu.es}

\author[uchceu]{Francisco Javier Mu\~{n}oz Almaraz}
\ead{malmaraz@uchceu.es}

\cortext[cor1]{Corresponding author.}
\address[uchceu]{Departamento de Matem\'aticas, F\'{\i}sica y Ciencias Tecnol\'ogicas,
Universidad Cardenal Herrera-CEU, CEU Universities,
Calle San Bartolom\'e, 55, Alfara del Patriarca 46115, Valencia, Spain}

\begin{abstract}
    Let $D=(D_{ij})_{i,j=1}^{n}$ be a symmetric dissimilarity matrix
    and let $\Dtwo=(D_{ij}^{2})_{i,j=1}^{n}$.  We study the affine family
    of real symmetric matrices
    \[
      A(\bg)=\tfrac{1}{2}\!\left(\ones\bg^{\top}
      +\bg\ones^{\top}-\Dtwo\right),
      \qquad \bg\in\R^{n},
    \]
    parametrised by a free diagonal vector $\bg\in\R^n$, whose
    off-diagonal entries satisfy
    \[
      D_{ij}^{2}=a_{ii}+a_{jj}-2a_{ij},
      \qquad i\neq j.
    \]
    We prove that the double-centred matrix
    \[
      A(\gF)=-\tfrac{1}{2}J\Dtwo J,
      \qquad
      J=I-\tfrac{1}{n}\ones\ones^{\top},
    \]
    is the unique minimiser of the Frobenius norm over this family,
    with minimiser $\gF$ given explicitly by
    \[
      g^{F}_{k}
      =
      \frac{1}{n}\sum_{i=1}^{n}D_{ik}^{2}
      -
      \frac{1}{2n^{2}}\sum_{i,j=1}^{n}D_{ij}^{2},
      \qquad k=1,\dots,n,
    \]
    and that this same representative simultaneously minimises
    every unitarily invariant norm
    \[
      \min_{\bg\in\R^n}\normui{A(\bg)},
    \]
    including the spectral norm, the nuclear norm, and all Schatten
    $p$-norms and Ky Fan $k$-norms.  Thus the double-centred matrix,
    central to classical multidimensional scaling, admits a purely
    variational characterisation that does not depend on the choice
    of norm and requires no Euclidean realisability assumption on $D$.
    \end{abstract}

\begin{keyword}
dissimilarity matrix \sep
Frobenius norm minimisation \sep
spectral norm minimisation \sep
unitarily invariant norms \sep
affine family of symmetric matrices \sep
double-centred matrix \sep
centring matrix \sep
matrix nearness problem
\MSC[2020] 15A18 \sep 15A60 \sep 51K99 \sep 90C25
\end{keyword}

\end{frontmatter}

\section{Introduction}
\label{sec:intro}

Distance and dissimilarity matrices arise throughout geometry,
multivariate analysis, numerical ecology, and data science; see,
for example, \cite{Gower1966,Dokmanic2015,Legendre2012,Deza1997}.
In these settings, one studies a symmetric matrix
$D=(D_{ij})_{i,j=1}^{n}$ whose entries encode pairwise dissimilarities
between $n$ objects.

\subsection*{The algebraic structure of squared dissimilarity data}

A fundamental relation connecting squared dissimilarity data to
inner-product structure is the identity
\begin{equation}\label{eq:gram}
  D_{ij}^{2} = a_{ii}+a_{jj}-2a_{ij}, \qquad i,j=1,\dots,n,
\end{equation}
where $A=(a_{ij})$ is a symmetric matrix.  In the classical
Euclidean setting, where $D$ is a distance matrix and hence
$D_{ii}=0$, this identity is also consistent for $i=j$ and expresses
the squared distance between two points in terms of their Gram matrix.
It was brought to prominence by Young and
Householder~\cite{YoungHouseholder1938}, who used it to characterise
configurations of points in Euclidean space in terms of their mutual
distances.  Schoenberg~\cite{Schoenberg1938} later gave a definitive
characterisation of Hilbertian metrics via conditional negative
definiteness of squared distances, a result which in the finite
Euclidean setting yields the familiar spectral criterion: a symmetric
matrix $D$ with zero diagonal is a Euclidean distance matrix if and
only if the centred matrix $-\tfrac{1}{2}J\Dtwo J$
is positive semidefinite, where \(J = I - \tfrac{1}{n}\ones\ones^{\top}\)
is the centring projection.
This connection underlies classical metric multidimensional scaling (CMDS),
also known in numerical ecology and related applied fields as principal
coordinate analysis (PCoA): Torgerson~\cite{Torgerson1952} developed a
foundational metric-scaling procedure based on scalar-product matrices
derived from interpoint distances, and Gower~\cite{Gower1966} later
systematised the corresponding principal-coordinate construction, in which
the centred matrix \(-\tfrac{1}{2}J\Dtwo J\), with
\(J=I-\tfrac{1}{n}\ones\ones^{\top}\), is diagonalised to obtain Euclidean
coordinates from pairwise distances.  
The theory of Euclidean distance matrices (EDMs) and its ramifications 
are surveyed in \cite{Krislock2012,Liberti2014,Dokmanic2015}.

In the present paper, we do not assume that the dissimilarity matrix
comes from a Euclidean configuration.  We only use the off-diagonal
part of \eqref{eq:gram}, namely
\[
  D_{ij}^{2} = a_{ii}+a_{jj}-2a_{ij},
  \qquad i\neq j.
\]
Once the diagonal entries $a_{11},\dots,a_{nn}$ are fixed, these
relations determine all off-diagonal entries of $A$.  Writing
$\bg=(g_{1},\dots,g_{n})^{\top}\in\R^{n}$ for the diagonal, the
family of symmetric matrices compatible with the prescribed
off-diagonal squared dissimilarity data is
\begin{equation}\label{eq:Ag}
  A(\bg)
  =
  \tfrac{1}{2}\!\left(\ones \bg^{\top}
  +\bg\ones^{\top}-\Dtwo\right),
  \qquad \bg\in\R^{n}.
\end{equation}
Indeed, for $i\neq j$ this gives
\[
  a_{ij}(\bg)
  =
  \tfrac{1}{2}\bigl(g_i+g_j-D_{ij}^{2}\bigr),
\]
so that $D_{ij}^{2}=a_{ii}+a_{jj}-2a_{ij}$, while
$a_{ii}(\bg)=g_i-\tfrac{1}{2}D_{ii}^{2}$.  In particular, if
$D_{ii}=0$ for all $i$, then $\bg$ is exactly the diagonal of
$A(\bg)$ and \eqref{eq:Ag} parametrises all symmetric matrices
satisfying \eqref{eq:gram} for every pair $i,j$.
Thus $\Dtwo$ fixes the off-diagonal structure, while the diagonal
remains a free parameter in the usual zero-diagonal case.

\subsection*{The selection problem and its relation to existing work}

The problem considered here may be viewed as an affine variant of a
\emph{matrix nearness problem} in the sense of
Higham~\cite{Higham1988,Higham1989}: one seeks, with respect to a
prescribed matrix norm, a nearest matrix satisfying a specified
structural constraint.  
The literature on matrix nearness problems is extensive; the problem of
approximating an arbitrary matrix by a nearest symmetric positive
semidefinite matrix in the Frobenius and spectral norms was studied in
detail by Higham~\cite{Higham1988}.  In the Frobenius norm, the nearest
positive-semidefinite matrix is unique and has an explicit formula in
terms of the polar decomposition, whereas in the spectral norm, nearest
positive-semidefinite matrices need not be unique, and their computation
is based on Halmos' scalar characterisation, leading in general to
iterative algorithms.  Thus, the two norm-minimisation problems may select
different approximants.
The contrast with the present paper is instructive: here we show that,
for the specific affine family \eqref{eq:Ag}, the two norm-minimisation
problems are solved by the \emph{same} matrix and that a closed-form
expression exists for both.

A related line of work concerns the completion of partial distance
matrices.  Given a partial symmetric matrix with only certain entries
specified, the \emph{Euclidean distance matrix completion problem}
(EDMCP) asks for the unspecified entries that make the full matrix a
valid EDM.  Alfakih, Khandani, and Wolkowicz~\cite{Alfakih1999} studied the EDMCP
through a semidefinite-programming formulation of an approximate
completion problem.  Their formulation relies on the classical
Gram-to-distance map, in which the diagonal of the underlying Gram
matrix enters the distance identities in a way analogous to the free
vector \(\bg\) in \eqref{eq:Ag}. The key difference is that their
optimisation is carried out over positive semidefinite centred Gram
matrices, equivalently over Euclidean distance matrices, whereas the
present paper imposes no Euclidean realisability assumption on the
prescribed dissimilarity data and selects a canonical representative of
the affine family \eqref{eq:Ag} by norm minimisation.  
Trosset~\cite{Trosset2000} and Fang and O'Leary~\cite{FangOLeary2012} 
developed further numerical methods for distance matrix completion; 
again, these works focus on finding EDM completions, not on identifying 
the canonical representative of the unconstrained affine family \eqref{eq:Ag}.

In many applications of current interest—microbiota
analysis~\cite{Satten2017}, numerical ecology~\cite{Legendre2012}, 
compositional data analysis—the dissimilarity matrix $D$ 
may fail to be a metric and need not arise from any Euclidean embedding.  
The coefficients may be chosen for biological or ecological reasons 
(e.g., Bray–Curtis dissimilarity, UniFrac), and the classical EDM theory 
is therefore not directly applicable.  It then becomes natural to study 
the affine family \eqref{eq:Ag} without imposing realisability, and to
ask whether the free diagonal can be selected in a principled way.

This is precisely the situation in which PCoA is often used in practice:
one starts from a dissimilarity matrix chosen for scientific reasons, even
when the associated double-centred matrix need not be positive semidefinite.
The present paper therefore provides a variational explanation for the same
double-centred matrix used in CMDS/PCoA, but without assuming that the input
dissimilarities are Euclidean.

\subsection*{Universal optimality and the double-centred matrix}

The class of \emph{unitarily invariant norms} provides the natural
framework for the present work.  This class includes the
\emph{Frobenius norm} $\normF{A} = (\sum_{i,j}a_{ij}^{2})^{1/2}$,
a global quadratic criterion measuring the total energy of the matrix
entries, and the \emph{spectral norm} $\normtwo{A} = \sigma_1(A)$,
equal to the largest singular value, which captures the extremal
operator-theoretic behaviour and is the most natural norm from the
point of view of perturbation theory and numerical
analysis~\cite{Higham1988,Horn2013}.  Both, together with all Schatten
$p$-norms, the nuclear norm, and the Ky Fan $k$-norms, are
characterised by the singular values via symmetric gauge functions
(see Section~\ref{sec:main} for precise definitions and
references~\cite{Horn2013,Bhatia1997}).  Since these norms quantify
genuinely different aspects of matrix size, there is no a priori reason
why the minimisation problems
\[
  \min_{\bg\in\R^n}\normui{A(\bg)}
\]
should all be solved by the \emph{same} element of the affine
family~\eqref{eq:Ag}.  The main result of this paper shows that they
are, and identifies that common minimiser as the double-centred matrix
$-\tfrac{1}{2}J\Dtwo J$.

\subsection*{Contribution and novelty}

The present paper makes the following contributions:
\begin{enumerate}
  \item \textbf{Main result.}  We prove that the double-centred matrix
        $A(\gF)=-\tfrac{1}{2}J\Dtwo J$ is a minimiser of
        $\normui{A(\bg)}$ for \emph{every} unitarily invariant norm
        $\normui{\cdot}$ (Theorem~\ref{thm:main}).  This includes all
        Schatten $p$-norms ($1\leq p\leq\infty$), the nuclear norm, and
        the Ky Fan $k$-norms.  The key structural ingredient is
        Lemma~\ref{lem:compression}: the centred compression
        $JA(\bg)J=-\tfrac{1}{2}J\Dtwo J$ is independent of $\bg$,
        and the Frobenius minimiser $\gF$ is the unique parameter for
        which the off-centred part vanishes.  Universal optimality then
        follows from the contractivity of orthogonal pinching for
        unitarily invariant norms.
        This optimal representative is unique for strictly convex
        unitarily invariant norms, in particular for the Schatten-\(p\)
        norms with \(1<p<\infty\), including the Frobenius norm.
        For non-strictly convex norms, such as the spectral norm,
        the nuclear norm, and the Ky Fan \(k\)-norms,
        uniqueness may fail; the full description
        of the corresponding minimising sets is not pursued here.
  \item We prove that $\min_{\bg}\normF{A(\bg)}$ admits a unique
        minimiser $\gF$ given by an explicit closed formula
        (Proposition~\ref{prop:convex} and Lemma~\ref{lem:gF}),
        
        which provides a new \emph{variational characterisation} of
        the double-centred matrix central to CMDS and to its applied 
        formulation as PCoA \cite{Torgerson1952,Gower1966}: it is 
        the unique representative of \eqref{eq:Ag} selected by the 
        Frobenius norm, and a minimiser of every unitarily invariant 
        norm simultaneously, without any Euclidean realisability 
        assumption.      
        We further characterise, via a rank condition on $J\Dtwo J$,
        when all Schatten norms coincide at the optimum
        (Corollary~\ref{cor:rank} and
        Proposition~\ref{prop:rank1geom}): this occurs if and only
        if the double-centred squared dissimilarities factor as a
        rank-one outer product, which in the Euclidean case is
        equivalent to collinearity of the $n$ points.
  \item Along the natural one-dimensional slice
        $\bg(t)=\gF+\frac{t}{n}\ones$, we obtain the exact formula
        \[
            \normtwo{A(\bg(t))}=\max\{\rho^*,|t|\},
            \qquad
            \rho^*=\normtwo{A(\gF)}.
        \]
        Consequently, whenever $\rho^*>0$, the spectral-norm minimiser
        is not unique: every $t$ with $|t|\leq \rho^*$ gives another
        spectral-norm minimiser along this slice.  By contrast, for
        strictly convex unitarily invariant norms, in particular for
        the Schatten-$p$ norms with $1<p<\infty$, the minimiser is
        unique and is given by $\gF$.  The complete description of the
        minimising sets for non-strictly convex unitarily invariant
        norms, such as the spectral norm, the nuclear norm, and the
        Ky Fan $k$-norms, is not addressed here.
\end{enumerate}

The argument is purely matrix-theoretic and relies on no
Euclidean realisability assumption on $D$.

\subsection*{Organisation}

Section~\ref{sec:main} states and proves the main results.
Section~\ref{sec:example} provides explicit numerical illustrations.
Section~\ref{sec:discussion} discusses connections with CMDS,
extensions to other unitarily invariant norms, and open questions.

\section{Main result}
\label{sec:main}

Throughout, $\ones\in\R^{n}$ denotes the all-ones vector,
$I$ the identity matrix, and
$J = I - \tfrac{1}{n}\ones\ones^{\top}$ the orthogonal projection
onto $\ones^{\perp}$.  For $\bg\in\R^n$ let $A(\bg)$ be the matrix
defined in \eqref{eq:Ag}.  We denote by $\normui{\cdot}$ any
unitarily invariant norm on $\R^{n\times n}$, i.e.\ any norm
satisfying $\normui{UAV}=\normui{A}$ for all orthogonal $U,V$.
By Bhatia's characterisation of unitarily invariant norms \cite{Bhatia1997}, every such norm
has the form $\normui{A} = \Phi(\sigma(A))$, where
$\sigma(A)=(\sigma_1,\dots,\sigma_n)$ is the vector of singular
values of $A$ in non-increasing order and
$\Phi\colon\R^{n}\to\R$ is a symmetric gauge function.
Important special cases are the \emph{Frobenius norm}
$\normF{A}=(\sum_{i,j}a_{ij}^{2})^{1/2}$ (gauge function
$\Phi=\ell^2$), the \emph{spectral norm}
$\normtwo{A}=\sigma_1(A)$ (gauge function $\Phi=\ell^\infty$),
and the \emph{Schatten $p$-norms}
$\|A\|_{S_p}=(\sum_i \sigma_i^p)^{1/p}$ for $1\leq p<\infty$
(gauge function $\Phi=\ell^p$).

\begin{proposition}[Strict convexity of the Frobenius objective]
    \label{prop:convex}
    The function $f\colon\R^{n}\to\R$, $f(\bg)=\normF{A(\bg)}^{2}$, 
    is a strictly convex quadratic with positive-definite Hessian.
    In particular, the problem
    \[
        \min_{\bg\in\R^{n}}\normF{A(\bg)}
    \]
    admits a unique minimiser.
\end{proposition}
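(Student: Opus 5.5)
Since $A(\bg)$ is affine in $\bg$, we write $A(\bg)=\tfrac12 L(\bg)-\tfrac12\Dtwo$, where $L(\bg)=\ones\bg^{\top}+\bg\ones^{\top}$ is linear. Then
\[
f(\bg)=\tfrac14\normF{L(\bg)}^2-\tfrac12\langle L(\bg),\Dtwo\rangle_F+\tfrac14\normF{\Dtwo}^2
\]
is a quadratic polynomial in $\bg$. Its Hessian is $\tfrac12$ times the matrix of the quadratic form $q(\bg)=\normF{L(\bg)}^2$, so the Hessian is positive definite exactly when $q(\bg)>0$ for all $\bg\neq 0$, i.e.\ when $L$ is injective. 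Strict convexity, coercivity and the unique minimiser then follow from standard facts about quadratics with positive-definite Hessian.

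To identify the Hessian, we expand $q$ using $\langle \bm{a}\bm{b}^{\top},\bm{c}\bm{d}^{\top}\rangle_F=(\bm{a}^{\top}\bm{c})(\bm{b}^{\top}\bm{d})$:
\[
q(\bg)=2n\,\bg^{\top}\bg+2(\ones^{\top}\bg)^2=\bg^{\top}\bigl(2nI+2\ones\ones^{\top}\bigr)\bg .
\]
The Hessian of $f$ is therefore $H=nI+\ones\ones^{\top}$. Its eigenvalues are $2n$ (eigenvector $\ones$) and $n$ (multiplicity $n-1$, on $\ones^{\perp}$), so $H$ is positive definite for $n\ge1$. Equivalently, $q(\bg)\ge 2n\|\bg\|^2$, which already gives injectivity of $L$ directly.

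Finally, since $H\succ0$, the function $f$ is strictly convex. Because $f(\bg)\ge \tfrac{n}{2}\|\bg\|^2-C\|\bg\|$, it is also coercive, so a minimiser exists. The minimiser is unique and is characterised by $\nabla f=0$, i.e.\ $H\bg=\bm{b}$, where $\bm{b}=\tfrac12\cdot 2\,\Dtwo\ones/\,$(after the constant bookkeeping) is linear in $\Dtwo\ones$; the paper computes it explicitly in Lemma~\ref{lem:gF}. Minimising $\normF{A(\bg)}$ is equivalent to minimising $f$, since squaring is monotone on $[0,\infty)$. The only step needing care is the Frobenius inner-product expansion giving $H$, and that is a routine computation.
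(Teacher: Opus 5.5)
Your proof is correct and follows essentially the paper's route. Both arguments reduce to positive definiteness of the quadratic form $\bh\mapsto\tfrac12\|\ones\bh^{\top}+\bh\ones^{\top}\|_F^2$. The paper gets this by noting that vanishing forces $h_i+h_j=0$ for all $i,j$, hence $\bh=0$ on taking $i=j$. You instead compute the Hessian explicitly as $nI+\ones\ones^{\top}$, with eigenvalues $2n$ and $n$. This slightly sharper statement also gives a strong-convexity constant and makes the first-order condition $n\bg+(\ones^{\top}\bg)\ones=\Dtwo\ones$ of Lemma~\ref{lem:gF} immediate.

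Your garbled expression for $\bm{b}$ should simply read $\bm{b}=\Dtwo\ones$, since by symmetry of $\Dtwo$ the linear part of $f$ is $-\bg^{\top}\Dtwo\ones$.
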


    \begin{proof}
    Since $\bg\mapsto A(\bg)$ is affine, $f$ is a quadratic 
    polynomial.
    For any $\bh\in\R^{n}$ and $t\in\R$,
    \[
        A(\bg+t\bh) = A(\bg) + 
        \tfrac{t}{2}(\ones \bh^{\top}+\bh\ones^{\top}),
    \]
    so the second directional derivative equals
    \[
        \frac{d^{2}}{dt^{2}}f(\bg+t\bh)\Big|_{t=0}
        = 2\Bigl\|\tfrac{1}{2}(\ones \bh^{\top}
        +\bh\ones^{\top})\Bigr\|_{F}^{2}.
    \]
If this vanishes, then $\ones \bh^{\top}+\bh\ones^{\top}=0$, 
which entrywise gives $\bh_{i}+\bh_{j}=0$ for all $i,j$.  Taking 
$i=j$ yields $\bh=0$.
Therefore the Hessian 
of $f$ is positive definite. Consequently, $f$ is a strictly convex 
quadratic and is coercive. Hence $f$ attains a unique minimiser.
\end{proof}

\begin{lemma}[Centred compression of $A(\bg)$]
    \label{lem:compression}
    Let $\bg\in\R^n$ be arbitrary. Then
    \begin{equation}\label{eq:centred_compression}
      JA(\bg)J = -\tfrac{1}{2}J\Dtwo J .
    \end{equation}
    In particular, the compression of $A(\bg)$ to $\ones^\perp$ is
    independent of $\bg$.
    \end{lemma}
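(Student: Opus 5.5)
The plan is to expand $JA(\bg)J$ using the definition \eqref{eq:Ag} and linearity of the map $M\mapsto JMJ$:
\[
  JA(\bg)J=\tfrac12\bigl(J\ones\bg^{\top}J+J\bg\ones^{\top}J-J\Dtwo J\bigr).
\]
The only fact needed is that $J$ annihilates the all-ones vector. Since $\ones^{\top}\ones=n$, we have
\[
  J\ones=\ones-\tfrac1n\ones(\ones^{\top}\ones)=0,
\]
and, because $J$ is symmetric, also $\ones^{\top}J=(J\ones)^{\top}=0$.

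With this in hand, I will show that the two rank-one terms vanish. The first, $J\ones\bg^{\top}J$, is zero because of the left factor $J\ones=0$. The second, $J\bg\ones^{\top}J$, is zero because of the right factor $\ones^{\top}J=0$. Neither argument uses any property of $\bg$.

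What remains is exactly $-\tfrac12 J\Dtwo J$, which proves \eqref{eq:centred_compression}. The ``in particular'' clause follows at once, since the right-hand side does not involve $\bg$. Here $J$ is the orthogonal projection onto $\ones^\perp$, so $JA(\bg)J$ is precisely the compression of $A(\bg)$ to that subspace.

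There is no real obstacle in this argument. The one point to check is that $J$ is symmetric, so that annihilation of $\ones$ on the left transfers to the right; this holds because $\ones\ones^{\top}$ is symmetric. In particular, the $D_{ii}$ terms need no special treatment, because $\Dtwo$ appears only linearly and passes through unchanged.
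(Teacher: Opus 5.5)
Your proposal is correct and is the paper's argument: expand $JA(\bg)J$ by linearity and kill both rank-one terms using $J\ones=0$ and its transpose $\ones^{\top}J=0$. Your explicit check that $J$ is symmetric, which is what lets the annihilation act on the right factor, is a detail the paper leaves implicit.
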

    
    \begin{proof}
    Using $J\ones=0$ and the definition of $A(\bg)$, we obtain
    \[
      JA(\bg)J
      =
      \tfrac{1}{2}
      \bigl(
        J\ones\bg^\top J
        +
        J\bg\ones^\top J
        -
        J\Dtwo J
      \bigr)
      =
      -\tfrac{1}{2}J\Dtwo J .
    \]
    This proves the claim.
    \end{proof}
    
    \begin{lemma}[Identification of the Frobenius minimiser]
    \label{lem:gF}
    The unique minimiser of
    \[
      \min_{\bg\in\R^n}\normF{A(\bg)}
    \]
    is the vector $\gF$ given by
    \begin{equation}\label{eq:gF}
      g^{F}_{k}
      =
      \frac{1}{n}\sum_{i=1}^{n}D_{ik}^{2}
      -
      \frac{1}{2n^{2}}\sum_{i,j=1}^{n}D_{ij}^{2},
      \qquad k=1,\dots,n.
    \end{equation}
    Moreover,
    \begin{equation}\label{eq:AgF_annihilates_ones}
      A(\gF)\ones=0
    \end{equation}
    and hence
    \begin{equation}\label{eq:AgF}
      A(\gF)=-\tfrac{1}{2}J\Dtwo J .
    \end{equation}
    \end{lemma}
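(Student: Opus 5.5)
By Proposition~\ref{prop:convex}, $f(\bg)=\normF{A(\bg)}^2$ is a strictly convex quadratic. Its unique minimiser is therefore the unique solution of the first-order condition $\nabla f(\bg)=0$. The plan is to compute this gradient, solve the resulting linear system explicitly, and then verify \eqref{eq:AgF_annihilates_ones} and \eqref{eq:AgF} directly at the solution.

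For the gradient, take a direction $\bh$. Using the perturbation formula from the proof of Proposition~\ref{prop:convex},
\[
  \frac{d}{dt}f(\bg+t\bh)\Big|_{t=0}
  = 2\bigl\langle A(\bg),\tfrac12(\ones\bh^\top+\bh\ones^\top)\bigr\rangle_F
  = 2\,\bh^\top A(\bg)\ones ,
\]
where the last step uses the symmetry of $A(\bg)$. Hence $\nabla f(\bg)=2A(\bg)\ones$. Stationarity is then exactly $A(\bg)\ones=0$, which already gives \eqref{eq:AgF_annihilates_ones} for the minimiser.

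Next I write the stationarity condition in coordinates. Set $s=\ones^\top\bg$, $r_k=\sum_i D_{ik}^2$ and $S=\sum_{i,j}D_{ij}^2$. Since $D$ is symmetric,
\[
  2A(\bg)\ones = n\bg+s\ones-\Dtwo\ones ,
\]
so the $k$-th component of the condition reads $ng_k+s-r_k=0$. Summing over $k$ gives $2ns=S$, i.e.\ $s=S/(2n)$. Substituting back,
\[
  g_k=\frac{r_k}{n}-\frac{S}{2n^2},
\]
which is \eqref{eq:gF}. This solution is unique, consistent with the strict convexity.

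Finally, for \eqref{eq:AgF}: because $A(\gF)$ is symmetric and $A(\gF)\ones=0$, we also have $\ones^\top A(\gF)=0$. With $J=I-\tfrac1n\ones\ones^\top$ this gives $JA(\gF)J=A(\gF)$. Lemma~\ref{lem:compression} then yields $A(\gF)=-\tfrac12 J\Dtwo J$.

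No step here is a real obstacle. The only point needing care is the gradient identity $\langle A,\ones\bh^\top+\bh\ones^\top\rangle_F=2\bh^\top A\ones$, which holds because $A(\bg)$ is symmetric.
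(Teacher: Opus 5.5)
Your proposal is correct and follows essentially the same route as the paper. Both proofs derive the stationarity condition $A(\bg)\ones=0$ from the directional derivative $2\bh^\top A(\bg)\ones$ and solve it via $s=\ones^\top\bg=\frac{1}{2n}\ones^\top\Dtwo\ones$. Both then conclude $A(\gF)=JA(\gF)J=-\tfrac12 J\Dtwo J$ from Lemma~\ref{lem:compression}.
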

    
    \begin{proof}
    Since the map \(x\mapsto x^2\) is strictly increasing on
    \([0,\infty)\), the minimisers of \(\normF{A(\bg)}\) and
    \(\normF{A(\bg)}^2\) coincide.
    By Proposition~\ref{prop:convex}, the Frobenius objective has a
    unique minimiser.  We compute its first-order optimality condition.
    
    For any direction $\bh\in\R^n$,
    \[
      A(\bg+t\bh)
      =
      A(\bg)
      +
      \tfrac{t}{2}
      \bigl(\ones\bh^\top+\bh\ones^\top\bigr).
    \]
    Therefore
    \[
      \frac{d}{dt}\normF{A(\bg+t\bh)}^2\Big|_{t=0}
      =
      2\left\langle
        A(\bg),
        \tfrac{1}{2}
        \bigl(\ones\bh^\top+\bh\ones^\top\bigr)
      \right\rangle_F .
    \]
    Since $A(\bg)$ is symmetric, this derivative equals
    \[
      2\,\bh^\top A(\bg)\ones .
    \]
    Thus the critical-point condition is
    \[
      A(\bg)\ones=0 .
    \]
    Writing $s=\ones^\top\bg$, this condition becomes
    \[
      \tfrac{1}{2}
      \bigl(
        s\ones+n\bg-\Dtwo\ones
      \bigr)
      =0,
    \]
    or equivalently
    \[
      n\bg = \Dtwo\ones-s\ones .
    \]
    Taking the scalar product with $\ones$ gives
    \[
      ns
      =
      \ones^\top\Dtwo\ones-ns,
    \]
    hence
    \[
      s=\frac{1}{2n}\ones^\top\Dtwo\ones .
    \]
    Substitution yields
    \[
      \bg
      =
      \frac{1}{n}\Dtwo\ones
      -
      \frac{1}{2n^2}
      (\ones^\top\Dtwo\ones)\ones ,
    \]
    which is precisely \eqref{eq:gF}.  Hence the unique Frobenius
    minimiser is $\gF$ and it satisfies $A(\gF)\ones=0$.
    
    Since $A(\gF)$ is symmetric and annihilates $\ones$, we have
    $JA(\gF)J=A(\gF)$.  Combining this with
    Lemma~\ref{lem:compression} gives
    \[
      A(\gF)
      =
      JA(\gF)J
      =
      -\tfrac{1}{2}J\Dtwo J .
    \]
    \end{proof}

    \begin{remark}\label{rem:key}
    The key structural fact is not that the full spectrum of $A(\bg)$ is
    obtained by adding one $\bg$-dependent eigenvalue to the spectrum on
    $\ones^\perp$.  That statement is false in general, because
    $\ones^\perp$ need not be invariant under $A(\bg)$ and $\ones$ need
    not be an eigenvector.  What is true, and sufficient, is the stronger
    compression identity
    \[
      JA(\bg)J=-\tfrac{1}{2}J\Dtwo J .
    \]
    Thus all matrices in the affine family have the same centred
    compression.  The Frobenius minimiser $\gF$ is the unique parameter
    for which the off-centred part vanishes, namely $A(\gF)\ones=0$.
    
    \end{remark}
    
We next establish that orthogonal compressions are contractions for every unitarily invariant norm. The proof follows Bhatia~\cite{Bhatia2000}.
    
\begin{lemma}[Contractivity of orthogonal compression]
    \label{lem:proj}
      Let $M\in\R^{n\times n}$ and $P\in\R^{n\times n}$ be an orthogonal projection matrix. Then,
      $\normui{PMP} \leq \normui{M}$
      for every unitarily invariant norm.
   
    \end{lemma}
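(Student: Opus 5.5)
The plan is to use the standard pinching argument, which follows Bhatia~\cite{Bhatia2000}. Set $Q=I-P$ and $U=P-Q=2P-I$. Because $P$ is a symmetric idempotent, $U$ is symmetric and $U^{2}=I$, so $U$ is orthogonal. A direct expansion gives
\[
  \tfrac12\bigl(M+UMU\bigr)=PMP+QMQ ,
\]
which is the \emph{pinching} $\mathcal{C}(M)$ of $M$ with respect to the decomposition $\R^n=\operatorname{range}P\oplus\operatorname{range}Q$. By the triangle inequality and unitary invariance,
\[
  \normui{PMP+QMQ}\le\tfrac12\normui{M}+\tfrac12\normui{UMU}=\normui{M}.
\]
So the pinching is a contraction for every unitarily invariant norm.

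It then remains to show $\normui{PMP}\le\normui{PMP+QMQ}$. I would derive this from a second pinching. Set $V=P-Q$ again and note that
\[
  PMP=\tfrac12\bigl(X+VXV\bigr),\qquad X:=PMP-QMQ,
\]
since $VPMPV=PMP$ and $VQMQV=QMQ$. This shows $\normui{PMP}\le\normui{X}$. Next, I would check that $X$ and $PMP+QMQ$ have the same singular values. The two matrices differ by $X=(P-Q)(PMP+QMQ)=U\,(PMP+QMQ)$ with $U$ orthogonal. Indeed, $(P-Q)PMP=PMP$ and $(P-Q)QMQ=-QMQ$, so $X$ is an orthogonal multiple of the pinching and $\normui{X}=\normui{PMP+QMQ}$. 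Chaining the three steps gives $\normui{PMP}\le\normui{PMP+QMQ}\le\normui{M}$.

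The only step needing care is this second comparison, namely discarding the block $QMQ$. An alternative route for it is through singular values. Since $PMP\oplus QMQ$ is block diagonal in an orthonormal basis adapted to $P$, its singular values are the union of those of the two blocks. Hence $\sigma(PMP)$, padded with zeros, is entrywise dominated by $\sigma(PMP+QMQ)$. Monotonicity of symmetric gauge functions on nonnegative vectors then yields the inequality. I would present the orthogonal-multiplication trick as the main argument, since it uses only unitary invariance and the triangle inequality. The singular-value domination would be mentioned as a check. No earlier result of the paper is needed beyond the definition of $\normui{\cdot}$.
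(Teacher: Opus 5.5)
Your first step is correct and is the same as the paper's: with $U=P-Q$ orthogonal, $PMP+QMQ=\tfrac12(M+UMU)$, so the pinching is a contraction. The second step, however, rests on a false identity. From the two facts you cite, $VPMPV=PMP$ and $VQMQV=QMQ$, it follows that $VXV=PMP-QMQ=X$. Hence $\tfrac12(X+VXV)=X$, and your claimed identity $PMP=\tfrac12(X+VXV)$ would force $QMQ=0$. This cannot be rescued within the same idea. Two-sided conjugation by $P-Q$ is exactly the pinching operation. It kills the off-diagonal blocks $PMQ$ and $QMP$, but it fixes every matrix that is already block diagonal with respect to $\operatorname{range}P\oplus\operatorname{range}Q$, such as $X$ or $PMP+QMQ$. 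So no ``second pinching'' can discard a diagonal block like $QMQ$.

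The repair is to multiply on one side only. The facts you use in the next sentence, $(P-Q)PMP=PMP$ and $(P-Q)QMQ=-QMQ$, give $U(PMP+QMQ)=X$. Hence
\[
  PMP=\tfrac12\bigl((PMP+QMQ)+U(PMP+QMQ)\bigr),
\]
and the triangle inequality with unitary invariance yields $\normui{PMP}\le\normui{PMP+QMQ}$ directly. This is exactly the paper's argument: $PMP$ is the average of the pinching $\mathcal{P}(M)$ and the ``conjugate pinching'' $\mathcal{Q}(M)=(P-Q)\mathcal{P}(M)$, which have equal norms.

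Your fallback through singular values is sound as stated. In a basis adapted to $P$ the two matrices are $\operatorname{diag}(B_1,0)$ and $\operatorname{diag}(B_1,B_2)$, the sorted singular values of the first are entrywise dominated by those of the second, and symmetric gauge functions are monotone in absolute values. That would be a valid, genuinely different proof, at the price of invoking the gauge-function characterisation and the monotonicity lemma. As written, though, you designate the flawed conjugation identity as the main argument. You must either replace that step with the one-sided version above or promote the singular-value argument to the main proof.
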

    
    \begin{proof}
    The matrix $Q=I-P$ is also an orthogonal projector whose product with $P$  is $PQ=P(I-P)=P-P^2=P-P=0$.

    The pinching map $\mathcal{P}(M)=PMP+QMQ$ is a contraction. Indeed, the matrix $R=P-Q$ is orthogonal, $R^TR=(P-Q)^2=P^2-PQ-QP+Q^2=P+Q=I$, and verify 
    \[
       R^T MR=(P-Q)M(P-Q)=\mathcal{P}(M)-PMQ-QMP
    \]
    a similar expression links $M$ and $\mathcal{P}(M)$ 
    \[
       M=(P+Q)M(P+Q)=\mathcal{P}(M)+QMP+PMQ
    \]
    Adding both terms, the term $\mathcal{P}(M)$ is a linear combination of orthogonal transformations of $M$.
    \[
     \mathcal{P}(M) =\frac{1}{2} M + \frac{1}{2} R^TMR 
    \]
    Using the triangle inequality and the norm being unitarily invariant, the following bound is satisfied
    \[
     \normui{\mathcal{P}(M)} \leq \frac{1}{2} \normui{M} + \frac{1}{2} \normui{R^T MR}  \leq \frac{1}{2} \normui{M} + \frac{1}{2} \normui{M} =\normui{M}, 
    \]
    proving that the pinching map is a contraction. 
    We define a ``conjugate'' pinching $\mathcal{Q}(M)=PMP-QMQ$, which is a left transformation of the pinching by an orthogonal matrix.
    \[ 
        (P-Q)\mathcal{P}(M)=P^2MP+PQMQ-QPMP-Q^2MQ=\mathcal{Q}(M)
    \] 
    Hence, pinching and conjugate pinching have identical norms for unitarily invariant norms,  
    $\normui{\mathcal{P}(M)}=\normui{\mathcal{Q}(M)}$. We write $PMP$ as a convex combination of both matrices and apply the triangle inequality 
    \[
       \normui{PMP}=\normui{\frac{1}{2} \mathcal{P}(M) +\frac{1}{2} \mathcal{Q}(M)}\leq \frac{1}{2} \normui{\mathcal{P}(M)} + \frac{1}{2} \normui{\mathcal{Q}(M)}  \leq \normui{M} 
    \]      
    \end{proof} 
   
The universal optimality of the minimiser for all unitarily invariant norms is proved from the previous results.

\begin{theorem}[Universal optimality under all unitarily invariant norms]
    \label{thm:main}
    Let $D\in\R^{n\times n}$ be symmetric and let
    $\Dtwo=(D_{ij}^{2})\in\R^{n\times n}$.  Define $A(\bg)$ as in
    \eqref{eq:Ag}, and let $\gF$ be the unique Frobenius minimiser given
    by \eqref{eq:gF}.  Then $\gF$ minimises
    \[
      \normui{A(\bg)}
    \]
    over all $\bg\in\R^n$ for every unitarily invariant norm
    $\normui{\cdot}$.  In particular, $\gF$ minimises the spectral norm,
    the nuclear norm, the Frobenius norm, the Schatten $p$-norms, and the
    Ky Fan norms.
    \end{theorem}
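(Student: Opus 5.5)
The proof combines the three preceding results. Fix an arbitrary $\bg\in\R^n$ and an arbitrary unitarily invariant norm $\normui{\cdot}$. By Lemma~\ref{lem:compression}, the centred compression of $A(\bg)$ does not depend on $\bg$:
\[
  JA(\bg)J=-\tfrac{1}{2}J\Dtwo J .
\]
By Lemma~\ref{lem:gF}, this common compression is exactly the Frobenius minimiser:
\[
  -\tfrac{1}{2}J\Dtwo J=A(\gF).
\]
Combining the two identities gives $A(\gF)=JA(\bg)J$ for every $\bg$. So the candidate optimum is itself an orthogonal compression of every competitor in the family.

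Since $J$ is an orthogonal projection ($J^\top=J$, $J^2=J$), Lemma~\ref{lem:proj} applies with $P=J$ and $M=A(\bg)$. It yields
\[
  \normui{A(\gF)}=\normui{JA(\bg)J}\le\normui{A(\bg)} .
\]
Because $\bg$ was arbitrary, $\gF$ is a minimiser of $\bg\mapsto\normui{A(\bg)}$. Because the norm was arbitrary and Lemma~\ref{lem:proj} holds for every unitarily invariant norm simultaneously, the conclusion holds for all of them at once.

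The named special cases then follow by noting that each is unitarily invariant, being given by a symmetric gauge function of the singular values. These are the spectral norm ($\ell^\infty$), the nuclear norm ($\ell^1$), the Frobenius norm ($\ell^2$), the Schatten $p$-norms ($\ell^p$), and the Ky Fan $k$-norms (sum of the $k$ largest entries).

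There is essentially no remaining obstacle, since all the substantive work has been done in the lemmas. The one point to check is a hypothesis of Lemma~\ref{lem:proj}: the paper defines unitary invariance with respect to real orthogonal $U,V$, and the proof of that lemma only uses invariance under the orthogonal matrix $R=P-Q$. Hence the real setting suffices and no complexification is needed.

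Uniqueness is not claimed in the theorem, and I would not attempt it here.
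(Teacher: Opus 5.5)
Your proposal is correct and matches the paper's proof: the paper applies Lemma~\ref{lem:proj} with $P=J$, then uses Lemmas~\ref{lem:compression} and~\ref{lem:gF} to identify $JA(\bg)J$ with $A(\gF)$. Your side remark is also accurate: the proof of Lemma~\ref{lem:proj} only uses invariance under the real orthogonal matrix $R=P-Q$, both in the pinching step and in the conjugate-pinching step.
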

    
    \begin{proof}
    Using Lemma~\ref{lem:proj} with $P=J=I-\tfrac{1}{n}\ones\ones^\top $, 
    the following inequality holds 
    $ \normui{JA(\bg)J}\leq \normui{A(\bg)} $
    for all $\bg\in\R^n$.
    By Lemma~\ref{lem:compression} and Lemma~\ref{lem:gF},
    \[
      JA(\bg)J
      =
      -\tfrac{1}{2}J\Dtwo J
      =
      A(\gF).
    \]
    Consequently, $
      \normui{A(\gF)}
      \leq
      \normui{A(\bg)}
      $
      for all $\bg\in\R^n$,
    which proves the claim.    
    \end{proof}

\begin{corollary}[Frobenius and spectral optimality]
    \label{cor:FrobSpec}
    Under the hypotheses of Theorem~\ref{thm:main}, $\gF$ is the unique
    minimiser of $\min_{\bg}\normF{A(\bg)}$ and a minimiser of
    $\min_{\bg}\normtwo{A(\bg)}$.
    \end{corollary}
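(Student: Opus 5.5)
This corollary follows by specialising results already proved, so the plan is to split it into its two claims and cite the relevant earlier statement for each.

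\emph{Frobenius claim.} Uniqueness of the Frobenius minimiser is exactly the content of Proposition~\ref{prop:convex}. There, $f(\bg)=\normF{A(\bg)}^2$ was shown to be a strictly convex quadratic with positive-definite Hessian. Its unique minimiser was identified in Lemma~\ref{lem:gF} as the vector $\gF$ of \eqref{eq:gF}. Since $x\mapsto x^2$ is strictly increasing on $[0,\infty)$, minimisers of $\normF{A(\bg)}$ and of $f$ coincide. Hence $\gF$ is the unique minimiser of $\min_{\bg}\normF{A(\bg)}$. No further argument is needed.

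\emph{Spectral claim.} I will check that the spectral norm $\normtwo{\cdot}$ is unitarily invariant. For orthogonal $U,V$, the matrix $UAV$ has the same singular values as $A$. Equivalently, $\normtwo{UAV}=\sup_{\|\bx\|=1}\|UAV\bx\|=\sup_{\|\bw\|=1}\|A\bw\|$, using that $U$ preserves Euclidean length and $V$ maps the unit sphere onto itself. Theorem~\ref{thm:main} then applies with $\normui{\cdot}=\normtwo{\cdot}$, which gives $\normtwo{A(\gF)}\le\normtwo{A(\bg)}$ for all $\bg\in\R^n$. So $\gF$ is a minimiser.

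I expect no real obstacle. The one point to be careful about is the wording: for the spectral norm the statement claims only \emph{a} minimiser, not \emph{the} minimiser. Uniqueness genuinely fails there; as announced in the introduction, $\bg(t)=\gF+\tfrac{t}{n}\ones$ with $|t|\le\rho^*$ gives further minimisers. So the proof must not try to transfer the strict-convexity argument to $\normtwo{\cdot}$.
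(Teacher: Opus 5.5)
Your proposal is correct and matches the paper's proof: uniqueness of the Frobenius minimiser comes from Proposition~\ref{prop:convex} together with Lemma~\ref{lem:gF}, and the spectral claim is Theorem~\ref{thm:main} applied to the unitarily invariant norm $\normtwo{\cdot}$. One small correction to your closing aside: spectral non-uniqueness along the slice $\gF+\tfrac{t}{n}\ones$ requires $\rho^*>0$, because when $J\Dtwo J=0$ the only spectral minimiser is $\gF$.
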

    
    \begin{proof}
    The first assertion follows from Proposition~\ref{prop:convex} and
    Lemma~\ref{lem:gF}.  The second follows from
    Theorem~\ref{thm:main}, because the spectral norm is unitarily
    invariant.
    \end{proof}

\begin{corollary}[Norm equality and rank condition]
    \label{cor:rank}
    Let $\gF$ be as in Theorem~\ref{thm:main}.  The following assertions
    are equivalent:
    \begin{enumerate}
      \item[(i)]   $\displaystyle\min_{\bg}\normF{A(\bg)}
                   = \min_{\bg}\normtwo{A(\bg)}$;
      \item[(ii)]  $\normF{A(\gF)} = \normtwo{A(\gF)}$;
      \item[(iii)] $\rank(A(\gF))\leq 1$;
      \item[(iv)]  $\rank(J\Dtwo J)\leq 1$.
    \end{enumerate}
    \end{corollary}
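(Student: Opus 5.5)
The plan is to prove the cycle (i)$\Leftrightarrow$(ii)$\Leftrightarrow$(iii)$\Leftrightarrow$(iv) by reducing everything to statements about the single matrix $A(\gF)$.

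\textbf{(i)$\Leftrightarrow$(ii).} By Corollary~\ref{cor:FrobSpec}, $\gF$ minimises both $\normF{A(\bg)}$ and $\normtwo{A(\bg)}$. Hence
\[
\min_{\bg}\normF{A(\bg)}=\normF{A(\gF)}
\quad\text{and}\quad
\min_{\bg}\normtwo{A(\bg)}=\normtwo{A(\gF)},
\]
and (i) and (ii) are literally the same equation.

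\textbf{(ii)$\Leftrightarrow$(iii).} This uses only singular values. Let $\sigma_1\geq\sigma_2\geq\dots\geq\sigma_n\geq 0$ be the singular values of $A(\gF)$. Then
\[
\normF{A(\gF)}^2=\sum_i\sigma_i^2\geq\sigma_1^2=\normtwo{A(\gF)}^2,
\]
with equality if and only if $\sigma_2=\dots=\sigma_n=0$. That condition says exactly that at most one singular value is nonzero, i.e.\ $\rank A(\gF)\leq 1$. Both norms are nonnegative, so equality of the squares is equivalent to equality of the norms. The zero matrix is included, since then both norms vanish and the rank is $0$.

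\textbf{(iii)$\Leftrightarrow$(iv).} By Lemma~\ref{lem:gF}, \eqref{eq:AgF} gives $A(\gF)=-\tfrac12 J\Dtwo J$. Multiplying by the nonzero scalar $-\tfrac12$ does not change rank, so $\rank A(\gF)=\rank(J\Dtwo J)$.

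There is no real obstacle here. The only point needing care is the appeal to Corollary~\ref{cor:FrobSpec} (ultimately Theorem~\ref{thm:main}) in the first step: it is what ensures the two minima in (i) are attained at the same $\gF$, rather than at two different parameters.
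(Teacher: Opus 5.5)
Your proof is correct and follows the paper's argument step for step. You get (i)$\Leftrightarrow$(ii) because $\gF$ is both the Frobenius minimiser and a spectral minimiser, and (iii)$\Leftrightarrow$(iv) from $A(\gF)=-\tfrac12 J\Dtwo J$. The only cosmetic difference is that you handle (ii)$\Leftrightarrow$(iii) with singular values rather than with eigenvalues of the symmetric matrix $A(\gF)$ as the paper does; the two arguments coincide for symmetric matrices, and yours does not even need symmetry.
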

    
    \begin{proof}
    By Proposition~\ref{prop:convex}, $\gF$ is the unique Frobenius
    minimiser.  By Theorem~\ref{thm:main}, $\gF$ is also a spectral-norm
    minimiser.  Hence
    \[
      \min_{\bg}\normF{A(\bg)}
      =
      \normF{A(\gF)},
      \qquad
      \min_{\bg}\normtwo{A(\bg)}
      =
      \normtwo{A(\gF)}.
    \]
    This proves the equivalence of (i) and (ii).
    
    For any real symmetric matrix $M$,
    \[
      \normF{M}^{2}=\sum_i\lambda_i(M)^2,
      \qquad
      \normtwo{M}=\max_i|\lambda_i(M)|.
    \]
    Thus $\normF{M}=\normtwo{M}$ if and only if $M$ has at most one
    nonzero eigenvalue, i.e. if and only if $\rank(M)\leq 1$.  Applying
    this to $M=A(\gF)$ proves the equivalence of (ii) and (iii).
    
    Finally, by Lemma~\ref{lem:gF},
    \[
      A(\gF)=-\tfrac{1}{2}J\Dtwo J,
    \]
    so $\rank(A(\gF))=\rank(J\Dtwo J)$.  This proves the equivalence of
    (iii) and (iv).
    \end{proof}

    \begin{remark}[Variational characterisation of the double-centred matrix]
        \label{rem:CMDS}
        Identity \eqref{eq:AgF} shows that $A(\gF)$ is precisely the
        double-centred matrix $-\tfrac{1}{2}J\Dtwo J$ that lies at the heart
        of classical multidimensional scaling
        (CMDS), of principal coordinate analysis (PCoA) in numerical ecology and related applications, and of the theory of Euclidean distance matrices~\cite{Torgerson1952,Gower1966,Schoenberg1938,Krislock2012,Dokmanic2015}.
        The present result provides a purely variational characterisation of
        this canonical object: it is the representative of the affine family
        \eqref{eq:Ag} selected by the Frobenius norm, and it is also a
        minimiser for every unitarily invariant norm.  In particular, the
        classical double-centred matrix can be recovered without imposing
        Euclidean realisability and without privileging the Frobenius norm
        over other unitarily invariant notions of matrix size.
        \end{remark}

\begin{remark}[Norm-dependent uniqueness structure]
    \label{rem:uniqueness}
    The examples of Section~\ref{sec:example} illustrate a dichotomy that
    reflects the geometry of each gauge function:
    \begin{itemize}
      \item \emph{Strictly convex norms} (Schatten-$p$, $1<p<\infty$,
            including the Frobenius norm ($p=2$)):
            $\gF$ is the unique minimiser.  This follows from the fact that
            strictly convex gauge functions are strictly increasing in each
            singular value, so the extra eigenvalue $t$ in the direction
            $\ones$ can only be zeroed out at $t=0$, i.e.\ at $\bg=\gF$.
      \item \emph{Non-strictly convex norms} (spectral norm, nuclear norm,
            Ky Fan $k$-norms for $1\leq k<n$): the minimiser is not unique
            in general.  Along the slice $\bg(t)=\gF+\tfrac{t}{n}\ones$,
            the exact formula $\normtwo{A(\bg(t))}=\max\{\rho^*,|t|\}$
            shows that every $\bg(t)$ with $|t|\leq\rho^*$ is a spectral
            minimiser, while $\gF$ is the only one that also minimises the
            Frobenius norm.
    \end{itemize}
    This behaviour is consistent with the general picture in matrix 
    nearness problems~\cite{Higham1988}: for the nearest 
    positive-semidefinite matrix problem, Higham shows that the 
    Frobenius minimiser is unique and explicitly computable, whereas 
    spectral-norm minimisers need not be unique and are characterised 
    indirectly through Halmos' scalar condition.  
    Thus the spectral non-uniqueness exhibited by the slab 
    $|t|\le \rho^*$ is in line with the phenomena occurring in 
    Higham's spectral-norm setting.
    \end{remark}

\begin{proposition}[Geometric characterisation of the rank condition]
\label{prop:rank1geom}
Let $D\in\R^{n\times n}$ be a symmetric dissimilarity matrix.
The following assertions are equivalent:
\begin{enumerate}
  \item[(i)]   $\rank(J\Dtwo J)\leq 1$.
  \item[(ii)]  There exists a vector $\bw\in\ones^{\perp}$,
               $\bw\neq 0$, and a scalar $\lambda\in\R$ such that
               $J\Dtwo J = \lambda\, \bw\bw^{\top}$.
               Equivalently, the double-centred matrix $-\tfrac{1}{2}J\Dtwo J$
               has at most one nonzero eigenvalue.
  \item[(iii)] There exists $\bw\in\ones^{\perp}$, $\|\bw\|=1$, and
               $\lambda\in\R$ such that the double-centred matrix
               satisfies
               \begin{equation}\label{eq:rank1form}
                 \bigl(J\Dtwo J\bigr)_{ij}
                 \;=\;
                 D^2_{ij} - \bar{d}_i - \bar{d}_j + \bar{d}
                 \;=\;
                 \lambda\, \bw_i \bw_j,
               \end{equation}
               for all $i,j$, where $\bar{d}_i = \tfrac{1}{n}\sum_k D^2_{ik}$
               and $\bar{d} = \tfrac{1}{n^2}\sum_{k,l}D^2_{kl}$.
               That is, the double-centred squared dissimilarities
               factor as a rank-1 outer product $\lambda\,\bw\bw^\top$
               with $\bw\in\ones^{\perp}$.
  \item[(iv)]  \textup{(Euclidean case.)}
               If $D$ is the distance matrix of a configuration
               $\bx_1,\dots,\bx_n\in\R^d$, then $\rank(J\Dtwo J)\leq 1$
               if and only if all $n$ points are collinear
               \textup{(}lie on a common affine line in $\R^d$\textup{)}.
\end{enumerate}
In any of these cases, $\normF{A(\gF)}=\normtwo{A(\gF)}=
\tfrac{1}{2}|\lambda|$, and all Schatten norms of
$A(\gF)$ coincide:
\[
  \|A(\gF)\|_{S_p}
  \;=\;
  \normtwo{A(\gF)}
  \;=\;
  \normF{A(\gF)}
  \quad\text{for all }1\leq p\leq\infty.
\]
\end{proposition}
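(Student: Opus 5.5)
The plan is to prove the cycle (i)$\Rightarrow$(ii)$\Rightarrow$(iii)$\Rightarrow$(i), then treat (iv) separately as a statement about Euclidean configurations, and finally derive the norm identities.

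\textbf{The cycle (i)$\Rightarrow$(ii)$\Rightarrow$(iii)$\Rightarrow$(i).} The matrix $M=J\Dtwo J$ is real symmetric, and $M\ones=0$ because $J\ones=0$. If $\rank M\le 1$, the spectral theorem gives $M=\lambda\bw\bw^\top$ with $\|\bw\|=1$ when $M\neq0$. The eigenvector $\bw$ belongs to a nonzero eigenvalue, so it is orthogonal to the kernel vector $\ones$, i.e.\ $\bw\in\ones^\perp$. When $M=0$ we simply take $\lambda=0$ and any unit $\bw\in\ones^\perp$; this is the degenerate case of (ii), and it requires $n\ge2$. The normalisation $\|\bw\|=1$ in (iii) comes from rescaling $\lambda$. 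The entrywise formula in (iii) is the direct expansion
\[
(J\Dtwo J)_{ij}=D^2_{ij}-\bar d_i-\bar d_j+\bar d,
\]
which uses the symmetry of $\Dtwo$ so that row and column means agree. The implication (iii)$\Rightarrow$(i) is immediate, since $\lambda\bw\bw^\top$ has rank at most one. The equivalence of (ii) with the phrase ``at most one nonzero eigenvalue'' holds because for symmetric matrices rank equals the number of nonzero eigenvalues.

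\textbf{The Euclidean case (iv).} Let $X$ be the $n\times d$ matrix with rows $\bx_i^\top$. Since $D_{ij}^2=\|\bx_i\|^2+\|\bx_j\|^2-2\bx_i^\top\bx_j$, the rank-one terms $\ones\bm{q}^\top$ and $\bm{q}\ones^\top$ (with $q_i=\|\bx_i\|^2$) are annihilated by $J$ on the appropriate side. This gives
\[
-\tfrac12 J\Dtwo J=(JX)(JX)^\top,
\]
so $\rank(J\Dtwo J)=\rank(JX)$. The rows of $JX$ are the centred points $\bx_i-\bar\bx$. They span a space of dimension at most one iff every $\bx_i-\bar\bx$ is a multiple of a single vector $\bm{u}$. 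That is exactly the statement that all points lie on the affine line $\bar\bx+\R\bm{u}$. Conversely, points on a common line $\bm{a}+s_i\bm{u}$ have centred versions $(s_i-\bar s)\bm{u}$, which have rank at most one. The main care here is the bookkeeping that $J$ kills the $\ones$-terms; everything else is linear algebra.

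\textbf{The norm identities.} By Lemma~\ref{lem:gF}, $A(\gF)=-\tfrac12 J\Dtwo J=-\tfrac12\lambda\bw\bw^\top$ with $\|\bw\|=1$. Its eigenvalues are therefore $-\lambda/2$ and zeros, so its only singular value is $|\lambda|/2$ with all others zero. Every Schatten $p$-norm, including $p=\infty$ and $p=2$, then equals $|\lambda|/2$, consistent with Corollary~\ref{cor:rank}.

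I expect no real obstacle in this proof. The step most deserving of care is the Gram identity in (iv).
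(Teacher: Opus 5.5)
Your proposal is correct and follows essentially the same route as the paper: the spectral theorem together with $J\Dtwo J\ones=0$ for (i)--(iii), the Gram identity $-\tfrac12 J\Dtwo J=(JX)(JX)^\top$ (the paper's $X^\top X$ with $X$ transposed) for (iv), and the single nonzero eigenvalue $-\lambda/2$ of $A(\gF)$ for the norm identities. You also treat the degenerate case $J\Dtwo J=0$ explicitly, noting that it needs $n\ge 2$ to supply a nonzero $\bw\in\ones^\perp$, and you derive the Gram identity directly from $D_{ij}^2=\|\bx_i\|^2+\|\bx_j\|^2-2\bx_i^\top\bx_j$; the paper glosses over the first point and cites the second.
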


\begin{proof}
\textbf{(i)\,$\Leftrightarrow$\,(ii).}
The matrix $J\Dtwo J$ is real and symmetric, so $\rank(J\Dtwo J)\leq 1$
if and only if it equals $\lambda\, \bw\bw^{\top}$ for some $\lambda\in\R$
and unit $\bw$.  Since $J\ones=0$, we have $J\Dtwo J\ones=0$, so
any eigenvector of $J\Dtwo J$ corresponding to a nonzero eigenvalue
lies in $\ones^{\perp}$; hence $\bw\in\ones^{\perp}$.

\textbf{(i)\,$\Leftrightarrow$\,(iii).}
Write $B = J\Dtwo J$ with entries
$B_{ij} = D^2_{ij} - \bar{d}_i - \bar{d}_j + \bar{d}$.
Since $B$ is real and symmetric, $\rank(B)\leq 1$ if and only if
$B = \lambda\,\bw\bw^{\top}$ for some $\lambda\in\R$ and unit $\bw$,
which is precisely \eqref{eq:rank1form}.
The constraint $\bw\in\ones^{\perp}$ follows from $B\ones=0$
(since $J\ones=0$).

\textbf{(i)\,$\Leftrightarrow$\,(iv).}
When $D$ is Euclidean, $-\tfrac{1}{2}J\Dtwo J = X^{\top}X$
where $X=[\bx_1-\bar{\bx}|\cdots|\bx_n-\bar{\bx}]\in\R^{d\times n}$ is the
centred configuration matrix.  Then
$\rank(-\tfrac{1}{2}J\Dtwo J) = \rank(X^{\top}X) = \rank(X)$,
which equals the affine dimension of the point cloud
$\{\bx_1,\dots,\bx_n\}$~\cite{YoungHouseholder1938,Dokmanic2015}.
This affine dimension is $\leq 1$ if and only if all points lie on
a common affine line.

\textbf{Norm equalities.}
When $\rank(A(\gF))\leq 1$, the matrix $A(\gF)=-\tfrac{1}{2}\lambda\,\bw\bw^\top$
has at most one nonzero eigenvalue, namely $-\tfrac{1}{2}\lambda$, with
eigenvector $\bw\in\ones^{\perp}$, $\|\bw\|=1$.  Then
$|\lambda_i(A(\gF))|^p = (\tfrac{1}{2}|\lambda|)^p$ for $i=1$ and
$0$ for $i>1$, so $\|A(\gF)\|_{S_p} = \tfrac{1}{2}|\lambda|$ for all $p$,
and in particular all Schatten norms coincide with
$\normtwo{A(\gF)}=\normF{A(\gF)}=\tfrac{1}{2}|\lambda|$.
\end{proof}

\begin{remark}[Geometric meaning of the rank condition]
    \label{rem:nodepotential}
    The characterisation \eqref{eq:rank1form} says that the
    double-centred squared dissimilarities factor as a single outer
    product $\lambda\,\bw\bw^\top$ with $\bw\in\ones^\perp$.
    This is the most degenerate non-trivial structure for the centred
    matrix: after row-and-column centring, all residual information is
    contained in a single direction.
    
    In the Euclidean rank-one case, the configuration is collinear.  After
    choosing an affine coordinate on the line, write
    $\bx=(x_1,\dots,x_n)^\top$ for the scalar coordinates of the points.
    Then
    \[
      -\tfrac{1}{2}J\Dtwo J = (J\bx)(J\bx)^\top .
    \]
    Consequently, if $J\bx\neq 0$, the factorisation
    \[
      J\Dtwo J = \lambda\,\bw\bw^\top
    \]
    is obtained with
    \[
      \bw=\frac{J\bx}{\|J\bx\|},
      \qquad
      \lambda=-2\|J\bx\|^2 .
    \]
    The optimal matrix is therefore
    \[
      A(\gF)=-\tfrac{1}{2}\lambda\,\bw\bw^\top=(J\bx)(J\bx)^\top,
    \]
    which is positive semidefinite and has rank at most one, with unique
    nonzero eigenvalue $-\tfrac{1}{2}\lambda=\|J\bx\|^2$.  In this
    case all Schatten norms of $A(\gF)$ coincide and are equal to
    \[
      \tfrac{1}{2}|\lambda|=\|J\bx\|^2,
    \]
    the total centred sum of squares of the one-dimensional configuration.
    \end{remark}

\section{Numerical examples}
\label{sec:example}

We illustrate the main results with three examples of increasing
complexity.  All numerical values have been verified by exact
rational arithmetic or independent floating-point computation.

\begin{example}[$n=4$, non-Euclidean dissimilarity matrix]
    \label{ex:noneuc4}
    Let
    \[
      \Dtwo =
      \begin{pmatrix}
         0 &  9 &  4 & 16 \\
         9 &  0 &  1 &  9 \\
         4 &  1 &  0 &  4 \\
        16 &  9 &  4 &  0
      \end{pmatrix}.
    \]
    The matrix $\Dtwo$ is not a squared Euclidean distance matrix:
    the optimal matrix $A(\gF)$ has a negative eigenvalue $\approx -0.597$,
    so $-\tfrac{1}{2}J\Dtwo J$ is not positive semidefinite.
    
    \textbf{Computing $\gF$.}
    Row means of $\Dtwo$ are $\bar{d}_1 = \tfrac{29}{4}$,
    $\bar{d}_2 = \tfrac{19}{4}$, $\bar{d}_3 = \tfrac{9}{4}$,
    $\bar{d}_4 = \tfrac{29}{4}$, and grand mean $\bar{d}=\tfrac{43}{8}$.
    Formula \eqref{eq:gF} gives
    \[
      \gF
      = \tfrac{1}{16}\bigl(73,\; 33,\; {-7},\; 73\bigr)^{\top},
      \qquad \ones^{\top}\gF = \tfrac{43}{4}.
    \]
    
    \textbf{Optimal matrix.}
    $A(\gF) = -\tfrac{1}{2}J\Dtwo J$ equals
    \[
      A(\gF) = \tfrac{1}{16}
      \begin{pmatrix}
         73 & -19 &   1 & -55 \\
        -19 &  33 &   5 & -19 \\
          1 &   5 &  -7 &   1 \\
        -55 & -19 &   1 &  73
      \end{pmatrix}.
    \]
    Its eigenvalues are approximately
    \[
      8,\qquad 3.347,\qquad 0,\qquad -0.597,
    \]
    giving $\rho^* = \normtwo{A(\gF)} = 8$.
    
    \textbf{Norms and verification.}
    \begin{align*}
      \normF{A(\gF)}  &\approx 8.693, &
      \normtwo{A(\gF)} &= 8\phantom{.000}, &
      \|A(\gF)\|_* &\approx 11.945.
    \end{align*}
    These values are consistent with Theorem~\ref{thm:main}: the same
    matrix $A(\gF)$ minimises the Frobenius norm, the spectral norm, and
    indeed every unitarily invariant norm over the affine family.
    
    \textbf{A non-unique spectral minimiser.}
    Although $\gF$ is the unique Frobenius minimiser, the spectral norm
    need not have a unique minimiser; for the nuclear norm, $\gF$ is also
    a minimiser, but uniqueness is not addressed in general.  
    To illustrate this, set
    \[
      \bg^*
      =
      \gF+\frac{21}{16}\ones .
    \]
    Then
    \[
      A(\bg^*)
      =
      A(\gF)+\frac{21}{16}\ones\ones^\top .
    \]
    Since $A(\gF)\ones=0$, the perturbation
    $\frac{21}{16}\ones\ones^\top$ only affects the one-dimensional
    direction $\operatorname{span}\{\ones\}$ and adds the eigenvalue
    \[
      4\cdot\frac{21}{16}=\frac{21}{4}.
    \]
    Because
    \[
      \frac{21}{4}<8=\rho^*,
    \]
    we still have
    \[
      \normtwo{A(\bg^*)}=8=\normtwo{A(\gF)}.
    \]
    On the other hand,
    \[
      \normF{A(\bg^*)}\approx 10.155
      >
      \normF{A(\gF)}\approx 8.693.
    \]
    Thus this example shows explicitly that the Frobenius minimiser is
    unique, whereas the spectral norm minimisation problem may admit
    additional minimisers.
    \end{example}

\begin{example}[$n=4$, collinear configuration --- rank-$1$ case]
    \label{ex:collinear}
    Let $x_1=0$, $x_2=1$, $x_3=2$, and $x_4=3$ be four equally spaced
    points on the real line, and set
    \[
      \bx=(0,1,2,3)^\top .
    \]
    The corresponding squared distance matrix is
    \[
      \Dtwo =
      \begin{pmatrix}
        0 & 1 & 4 &  9 \\
        1 & 0 & 1 &  4 \\
        4 & 1 & 0 &  1 \\
        9 & 4 & 1 &  0
      \end{pmatrix}.
    \]
    
    \textbf{Computing $\gF$.}
    Formula \eqref{eq:gF} gives
    \[
      \gF = \tfrac{1}{4}(9,1,1,9)^{\top},
      \qquad
      \ones^{\top}\gF = 5 .
    \]
    
    \textbf{Optimal matrix.}
    Since the configuration is Euclidean and one-dimensional, the
    double-centred matrix satisfies
    \[
      A(\gF)
      =
      -\tfrac{1}{2}J\Dtwo J
      =
      (J\bx)(J\bx)^{\top}.
    \]
    Here
    \[
      J\bx
      =
      \left(-\tfrac{3}{2},-\tfrac{1}{2},
      \tfrac{1}{2},\tfrac{3}{2}\right)^\top,
    \]
    and therefore
    \[
      A(\gF)
      =
      \tfrac{1}{4}
      \begin{pmatrix}
         9 &  3 & -3 & -9 \\
         3 &  1 & -1 & -3 \\
        -3 & -1 &  1 &  3 \\
        -9 & -3 &  3 &  9
      \end{pmatrix}.
    \]
    This matrix has rank exactly $1$ and its only nonzero eigenvalue is
    \[
      \lambda_1=\|J\bx\|^2=5 .
    \]
    
    \textbf{All Schatten norms coincide.}
    By Proposition~\ref{prop:rank1geom},
    \[
      \|A(\gF)\|_{S_p}
      =
      \normtwo{A(\gF)}
      =
      \normF{A(\gF)}
      =
      5,
      \qquad 1\leq p\leq\infty .
    \]
    This illustrates Corollary~\ref{cor:rank}: since
    $\rank(A(\gF))=1$, the Frobenius and spectral minimum values
    coincide, and in fact all Schatten norms take the same value at the
    optimum.
    
    \textbf{Geometric interpretation.}
    The collinear structure means that, after centring, the 
    configuration lies in the one-dimensional subspace of $\ones^\perp$ 
    spanned by $J\bx$.
    Equivalently, the double-centred matrix has rank one:
    \[
      J\Dtwo J
      =
      -2(J\bx)(J\bx)^\top .
    \]
    Thus the unique nonzero eigenvalue of $A(\gF)$ is
    $\|J\bx\|^2=5$, the total centred sum of squares of the
    one-dimensional configuration.
    \end{example}

\begin{example}[$n=5$, norm profile along the direction $\ones$]
\label{ex:profile5}
To illustrate the behaviour of the Frobenius, spectral, and nuclear
norms away from the Frobenius minimiser, consider
\[
  \Dtwo =
  \begin{pmatrix}
     0 &  4 &  9 & 25 & 16 \\
     4 &  0 &  1 &  9 &  4 \\
     9 &  1 &  0 &  4 &  1 \\
    25 &  9 &  4 &  0 &  9 \\
    16 &  4 &  1 &  9 &  0
  \end{pmatrix}.
\]
This matrix is not Euclidean, since $A(\gF)$ has a negative eigenvalue
approximately equal to $-0.887$.  Formula \eqref{eq:gF} gives
\[
  \gF
  \approx
  (7.52,\;0.32,\;-0.28,\;6.12,\;2.72)^\top,
  \qquad
  \ones^\top\gF=16.4 .
\]
The eigenvalues of $A(\gF)$ are approximately
\[
  13.561,\qquad 3.726,\qquad 0,\qquad 0,\qquad -0.887,
\]
and hence
\[
  \rho^*=\normtwo{A(\gF)}\approx 13.561,
  \qquad
  \normF{A(\gF)}\approx 14.091,
  \qquad
  \|A(\gF)\|_*\approx 18.173 .
\]

\textbf{Norm profile.}
We vary $\bg$ along the direction $\ones$ by setting
\[
  \bg(t)=\gF+\frac{t}{5}\ones .
\]
Then
\[
  A(\bg(t))
  =
  A(\gF)+\frac{t}{5}\ones\ones^\top .
\]
Since $A(\gF)\ones=0$, this is a block-diagonal perturbation with
respect to the orthogonal decomposition
\[
  \R^5=\ones^\perp\oplus\operatorname{span}\{\ones\}.
\]
The matrix $\frac{t}{5}\ones\ones^\top$ vanishes on $\ones^\perp$ and
has eigenvalue $t$ in the direction $\ones$.  Therefore the eigenvalues
of $A(\bg(t))$ are those of $A(\gF)$, together with the additional
eigenvalue $t$ replacing the zero eigenvalue in the direction
$\ones$.  Consequently,
\[
  \normtwo{A(\bg(t))}
  =
  \max\{\rho^*,|t|\}.
\]
Table~\ref{tab:normprofile} records the three norms as $t$ varies.

\begin{table}[ht]
\centering
\caption{Norms of $A(\bg(t))$ along the direction
$\bg(t)=\gF+\tfrac{t}{5}\ones$.  Along this direction the spectral
norm is constant for $|t|\leq\rho^*\approx 13.56$.}
\label{tab:normprofile}
\smallskip
\begin{tabular}{rrrr}
\hline
$t$ &
$\normF{A(\bg(t))}$ &
$\normtwo{A(\bg(t))}$ &
$\|A(\bg(t))\|_*$ \\
\hline
$-27.12$ & $30.563$ & $27.121$ & $45.295$ \\
$-13.56$ & $19.556$ & $13.561$ & $31.734$ \\
$ -6.78$ & $15.638$ & $13.561$ & $24.954$ \\
$\phantom{-}0$ & $\mathbf{14.091}$ & $\mathbf{13.561}$ & $\mathbf{18.173}$ \\
$\phantom{-}6.78$ & $15.638$ & $13.561$ & $24.954$ \\
$\phantom{-}13.56$ & $19.556$ & $13.561$ & $31.734$ \\
$\phantom{-}27.12$ & $30.563$ & $27.121$ & $45.295$ \\
\hline
\end{tabular}
\end{table}

The table illustrates three features predicted by the theory along
this one-dimensional slice of the affine family:
\begin{enumerate}
  \item the Frobenius norm has a unique strict minimum at $t=0$,
        that is, at $\bg=\gF$; the nuclear norm also attains its
        minimum along this slice at $t=0$, though non-uniqueness
        may occur along other directions;
  \item the spectral norm is flat for $|t|\leq\rho^*$, showing that
        the spectral norm minimisation problem may have non-unique
        minimisers;
  \item all three norms grow symmetrically as $|t|\to\infty$.
\end{enumerate}
Figure~\ref{fig:normprofile} displays the corresponding continuous
profiles, making the flat spectral region and the strict Frobenius
minimum simultaneously visible.
\end{example}

\begin{figure}[ht]
    \centering
    \includegraphics[width=0.82\textwidth]{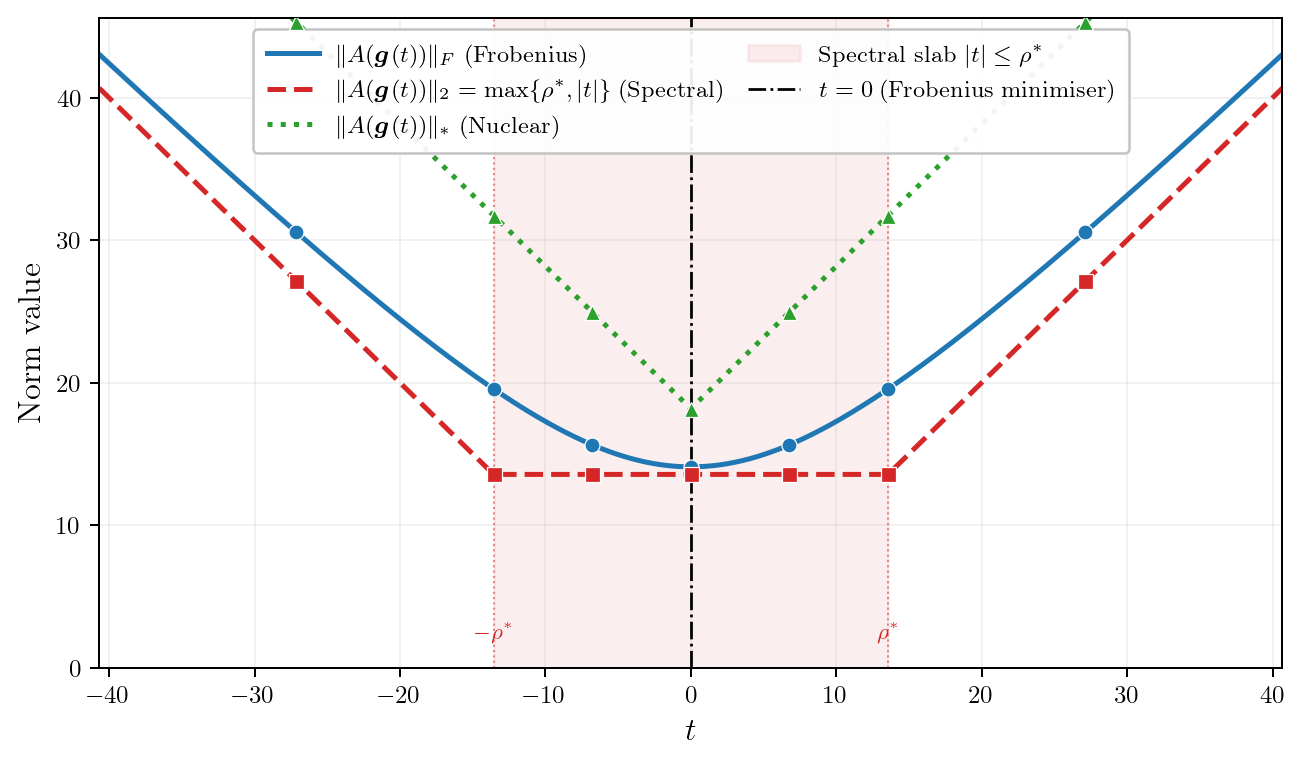}
    \caption{Norm profiles for Example~\ref{ex:profile5} along the
direction $\bg(t)=\gF+\tfrac{t}{5}\ones$.
The bottom axis shows the parameter $t$; the top axis shows
the corresponding value $\alpha=\tfrac{1}{2}\ones^{\top}\bg(t)
=\alpha(\gF)+\tfrac{t}{2}$, where $\alpha(\gF)\approx 8.20$.
Along this direction one has
$A(\bg(t))=A(\gF)+\tfrac{t}{5}\ones\ones^\top$,
so the spectral norm satisfies the exact formula
$\normtwo{A(\bg(t))}=\max\{\rho^*,|t|\}$ (dashed red),
which is constant for $|t|\leq\rho^*\approx 13.56$,
equivalently $|\alpha-\alpha(\gF)|\leq\rho^*/2\approx 6.78$
(shaded region).
The Frobenius norm $\normF{A(\bg(t))}$ (solid blue) has a unique
strict minimum at $t=0$, i.e.\ at $\bg=\gF$; the
nuclear norm $\|A(\bg(t))\|_*$ (dotted green) also attains its
minimum along this slice at $t=0$.
Filled symbols ($\bullet$\,circle, $\blacksquare$\,square,
$\blacktriangle$\,triangle) mark the seven entries of
Table~\ref{tab:normprofile}.}
    \label{fig:normprofile}
    \end{figure}

\section{Discussion and extensions}
\label{sec:discussion}

\paragraph{Summary}
We have proved that, for the affine family~\eqref{eq:Ag} of symmetric
matrices compatible with prescribed off-diagonal squared dissimilarity
data $\Dtwo$, the Frobenius minimiser $\gF$ simultaneously minimises
every unitarily invariant norm (Theorem~\ref{thm:main}).  The key
structural fact is the centred compression identity
\[
  JA(\bg)J=-\tfrac{1}{2}J\Dtwo J,
  \qquad \bg\in\R^n,
\]
together with the observation that the Frobenius minimiser is the
unique parameter satisfying $A(\gF)\ones=0$.  Consequently,
\[
  A(\gF)=-\tfrac{1}{2}J\Dtwo J,
\]
so the optimal representative is precisely the double-centred squared
dissimilarity matrix that appears in CMDS and, in applied fields such as numerical ecology and microbiome analysis, in PCoA \cite{Torgerson1952,Gower1966,Legendre2012,Satten2017}.  
This gives a purely variational characterisation of the canonical CMDS/PCoA  
matrix without requiring Euclidean realisability of the dissimilarity data.

\paragraph{Uniqueness and non-uniqueness}
The Frobenius minimiser $\gF$ is unique by strict convexity of the
Frobenius objective.  For a general unitarily invariant norm,
Theorem~\ref{thm:main} shows that $\gF$ is always a minimiser, but it
does not imply uniqueness.  For strictly convex unitarily invariant norms
(Schatten-$p$ with $1<p<\infty$, including the Frobenius norm), $\gF$
is the unique minimiser.  For non-strictly convex norms such as the
spectral norm, the nuclear norm, or the Ky Fan $k$-norms ($1\leq k<n$),
additional minimisers may exist.  This is already visible
along the one-dimensional slice
\[
  \bg(t)=\gF+\frac{t}{n}\ones,
\]
for which
\[
  A(\bg(t))=A(\gF)+\frac{t}{n}\ones\ones^\top.
\]
Since $A(\gF)\ones=0$, this perturbation only adds the eigenvalue $t$
in the direction $\ones$.  Hence, along this slice,
\[
  \normtwo{A(\bg(t))}
  =
  \max\{\normtwo{A(\gF)},|t|\},
\]
so the spectral norm is constant for
$|t|\leq\normtwo{A(\gF)}$.  A full characterisation of the global
solution sets for non-strictly convex unitarily invariant norms would
require analysing equality cases in the pinching inequality.

\paragraph{Rank condition and collinearity}
Proposition~\ref{prop:rank1geom} characterises the degenerate case
$\rank(J\Dtwo J)\leq 1$ in equivalent algebraic and geometric terms.
In the Euclidean setting this condition is equivalent to the
configuration having affine dimension at most one, that is, to the
points being collinear.  In that case,
\[
  A(\gF)=-\tfrac{1}{2}J\Dtwo J
\]
is a positive semidefinite matrix of rank at most one.  If the
configuration is non-trivial and one-dimensional, then
\[
  A(\gF)=(J\bx)(J\bx)^\top,
\]
and all Schatten norms of $A(\gF)$ coincide with
$\|J\bx\|^2$, the total centred sum of squares of the scalar
configuration.

\paragraph{Open question: equality cases and solution sets}
The proof of Theorem~\ref{thm:main} uses the contractivity of
orthogonal pinching for unitarily invariant norms.  It therefore
identifies a universal minimiser, but it does not fully describe all
possible minimisers for norms that are not strictly convex.  A natural
open question is to characterise the equality cases
\[
  \normui{A(\gF)}=\normui{A(\bg)}
\]
for different unitarily invariant norms.  For the spectral norm, the
examples above show that non-uniqueness can occur; for the nuclear norm,
the Ky Fan norms, and other non-strictly convex gauges, the structure of
the solution set may depend on multiplicities and on the equality cases
in the pinching inequality.  By contrast, for Schatten-$p$ norms with
$1<p<\infty$, $\gF$ is the unique minimiser.

\paragraph{Relation to sensor network localization and SDP}
The EDMCP solved by Alfakih et al.\ \cite{Alfakih1999} adds the hard
constraint that $A(\bg)\succeq 0$ to the selection problem.  The
present paper shows that the unconstrained variational selection,
obtained by minimising any unitarily invariant norm, always selects
the same canonical representative
\[
  A(\gF)=-\tfrac{1}{2}J\Dtwo J .
\]
If $A(\gF)\succeq 0$, then this representative is also feasible for
the positive-semidefinite constraint appearing in EDM completion and
sensor network localisation.  If not, the unconstrained optimum
provides a natural benchmark, and possibly a useful starting point,
for constrained semidefinite formulations.

\paragraph{Computational cost}
The explicit formula \eqref{eq:gF} computes $\gF$ in $O(n^2)$
operations: one pass over $\Dtwo$ to compute row sums and one to
compute the grand sum.  This cost is optimal at the level of dense
input matrices, since reading $\Dtwo$ already requires $O(n^2)$
operations.  The optimal matrix
\[
  A(\gF)=-\tfrac{1}{2}J\Dtwo J
\]
can then be formed in $O(n^2)$ additional operations using the usual
double-centring formula, without explicitly forming the matrix $J$.

\section*{Acknowledgements}
This work was supported by the
Universidad CEU Cardenal Herrera under grants INDI25/17 and GIR25/14.

\bibliographystyle{elsarticle-num}

\end{document}